\newtheorem{theorem}{Theorem}[section]
\newtheorem{proposition}[theorem]{Proposition}
\newtheorem{lemma}[theorem]{Lemma}
\newtheorem{claim}[theorem]{Claim}
\newtheorem{fact}[theorem]{Fact}
\newtheorem{cor}[theorem]{Corollary}
\newtheorem{definition}[theorem]{Definition}
\newtheorem{conjecture}[theorem]{Conjecture}
\theoremstyle{plain}
\numberwithin{equation}{theorem}
\theoremstyle{remark}
\newcommand{\C}{{\mathbb C}}
\newcommand{\Q}{{\mathbb Q}}
\newcommand{\Qbar}{\bar{\Q}}
\DeclareMathOperator{\bN}{\mathbb{N}}
\newcommand{\bG}{{\mathbb G}}
\newcommand{\bC}{{\mathbb C}}
\newcommand{\bA}{{\mathbb A}}
\newcommand{\bQ}{{\mathbb Q}}
\newcommand{\lra}{\longrightarrow}
\newcommand{\cO}{\mathcal{O}}
\newcommand{\cS}{\mathcal{S}}
\newcommand{\cP}{\mathcal{P}}
\title{$p$-adic logarithms for polynomial dynamics}
\author{D.~Ghioca and T.~J.~Tucker}
\keywords{Mordell-Lang conjecture, polynomial dynamics}
\subjclass[2000]{Primary 14K12, Secondary 37F10}
\thanks{The second author was partially supported by National Security
  Agency Grant 06G-067}
\address{
Dragos Ghioca \\
Department of Mathematics \& Computer Science\\
University of Lethbridge \\
4401 University Drive \\ 
Lethbridge, Alberta T1K 3M4 
}
\email{ghioca@cs.uleth.ca}
\address{
Thomas Tucker\\
Department of Mathematics\\
Hylan Building\\
University of Rochester\\
Rochester, NY 14627
}
\email{ttucker@math.rochester.edu}
\begin{document}

\begin{abstract}
  We prove a dynamical version of the Mordell-Lang conjecture for
  subvarieties of the affine space $\bA^g$ over a $p$-adic field,
  endowed with polynomial actions on each coordinate of $\bA^g$. We
  use analytic methods similar to the ones employed by Skolem,
  Chabauty, and Coleman for studying diophantine equations.
\end{abstract}

\maketitle

\section{Introduction}
\label{intro}

The Mordell-Lang conjecture was proved by Faltings \cite{Faltings}.
\begin{theorem}[Faltings]
\label{T:F}
Let $G$ be an abelian variety defined over the field of complex
numbers $\mathbb{C}$. Let $X\subset G$ be a closed subvariety and
$\Gamma\subset G(\mathbb{C})$ a finitely generated subgroup of
$G(\C)$. Then $X(\mathbb{C})\cap\Gamma$ is a finite union of cosets
of subgroups of $\Gamma$.
\end{theorem}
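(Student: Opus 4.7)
The plan is to reduce Theorem \ref{T:F} to the following geometric assertion: \emph{if $X \subset G$ is irreducible and $X(\C) \cap \Gamma$ is Zariski dense in $X$, then $X$ is a translate of an abelian subvariety of $G$.} Granted this, the Zariski closure of $X \cap \Gamma$ decomposes into finitely many irreducible components, each of which must then be a coset of an abelian subvariety; intersecting such a coset with $\Gamma$ yields a coset of a subgroup of $\Gamma$, giving the desired finite-union description.

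First I would replace $\Gamma$ by its division hull $\Gamma' = \{g \in G(\C) : ng \in \Gamma \text{ for some } n \geq 1\}$, which still has finite rank modulo torsion, and spread $G$, $X$, and finitely many generators of $\Gamma'$ out over a finitely generated $\Q$-subalgebra of $\C$. A specialization argument then reduces to the case where everything is defined over a number field $K$ and $\Gamma' \subset G(K)$, with the density of $X \cap \Gamma'$ preserved.

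To attack the geometric assertion I would use the Vojta--Faltings method. Fix an ample symmetric line bundle on $G$ and let $\hat h$ denote the associated Néron-Tate height; on $\Gamma' \otimes \R$ this is a positive-definite quadratic form. Assume $X \cap \Gamma'$ is Zariski dense and $X$ is not a translate of a proper abelian subvariety. For $m$ sufficiently large I would select points $P_1, \dots, P_m \in X \cap \Gamma'$ whose normalized height vectors are pairwise almost parallel in $\Gamma' \otimes \R$ and whose heights grow geometrically. On $X^m \subset G^m$ one then constructs an auxiliary global section of a judiciously chosen line bundle with small arithmetic degree, using Riemann-Roch on $G^m$ together with the quadratic behavior of $\hat h$ under the group law, and one compares this upper bound to the lower bound on the vanishing index at $(P_1, \dots, P_m)$ supplied by an arithmetic product theorem. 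The resulting contradiction forces $X$ to be a translate of an abelian subvariety, completing the reduction.

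The hard part is exactly this last comparison: proving the arithmetic product theorem in the generality needed, and converting Arakelov-theoretic estimates on $X^m \subset G^m$ into a usable height inequality, is the technical core of Faltings' proof and admits no real shortcut. A separate obstacle is the purely torsion case, where $\hat h$ vanishes on $\Gamma'$ and the height method collapses entirely; there one must invoke Raynaud's theorem (the Manin-Mumford conjecture) as an independent ingredient to handle the base of the induction on the rank of $\Gamma$.
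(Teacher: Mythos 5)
The paper does not prove Theorem~\ref{T:F}: it is stated as a known result, attributed to Faltings, and cited as \cite{Faltings}; it is used only as motivation for the dynamical conjectures and plays no further logical role. There is therefore no in-paper proof to compare your attempt against.

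Your sketch is a recognizable and broadly accurate outline of the Vojta--Faltings strategy: reduce to the geometric dichotomy that a dense set of $\Gamma$-points forces $X$ to be a coset of an abelian subvariety, pass to the division hull and specialize to a number field, run a diophantine-approximation argument on $X^m \subset G^m$ using N\'eron--Tate heights, an auxiliary section of small arithmetic degree, and an arithmetic product theorem, and handle the torsion base case via Raynaud's Manin--Mumford theorem. But as you yourself say, the steps you label the ``technical core''---constructing the small section via Riemann--Roch and the quadratic behaviour of $\hat h$, proving the product theorem in the form needed, and converting the Arakelov bounds into the contradictory height inequality---are precisely the content of Faltings' proof and are left as black boxes here. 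So this is a correct roadmap to the literature rather than a proof; the reduction steps (components of the Zariski closure, passage from $\Gamma'$ back to $\Gamma$, the specialization) are sound, and the route you describe is indeed the one Faltings (with Vojta's method) takes, but none of the substantive estimates are established.
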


In particular, Theorem~\ref{T:F} says that if a subvariety $X$ of an
abelian variety $G$ does not contain a translate of a positive
dimension algebraic subgroup of $G$, then $X$ has a finite
intersection with any finitely generated subgroup of $G(\C)$.
Theorem~\ref{T:F} has been generalized to semiabelian varieties $G$ by
Vojta (see \cite{V1}) and to finite rank subgroups $\Gamma$ of $G$ by
McQuillan (see \cite{McQ}). Recall that a semiabelian variety (over
$\mathbb{C}$) is an extension of an abelian variety by a torus
$(\mathbb{G}_m)^k$. The authors have proved the following dynamical version
of Theorem~\ref{T:F} (see \cite{endomorphism}): if $\phi$ is an
endomorphism of a semiabelian variety $G$ defined over $\C$, and
$V\subset G$ has no positive dimensional subvariety invariant under
$\phi$, then any orbit of $\phi$ has finite intersection with $V$.  In
case $G=\mathbb{G}_m^k$, this says that if an affine variety
$V\subset\mathbb{G}_m^k$ contains no subvariety which is invariant
under the map $(X_1,\dots,X_k)\mapsto (X_1^{e_1},\dots,X_k^{e_k})$
(with $e_i\in\mathbb{N}$), then $V$ has finite intersection
with the orbit of any point of $\bA^k$ under the above map on $G$.  It
is natural to ask whether a similar conclusion holds for any
polynomial action on $\bA^k$.  In this spirit, we propose the
following conjecture.

\begin{conjecture}
\label{dynamical M-L}
Let $f_1,\dots,f_g$ be polynomials in $\bC[X]$, let $\cP$ be their
action coordinatewise on $\bA^g$, and let $V$ be a subvariety of
$\bA^g$ that does not contain a positive dimensional subvariety
periodic under the $\mathcal{P}$-action.  Then the $\cP$-orbit of any
point in $\bA^g$ intersects $V$ in at most finitely many points.
\end{conjecture}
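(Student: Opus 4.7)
The plan is to use $p$-adic analytic methods in the spirit of Skolem--Chabauty--Coleman, which is the technique reflected in the paper's title and abstract. Since $V$, the polynomials $f_i$, and the chosen orbit point $\alpha$ are defined over a finitely generated subfield of $\bC$, one may specialize to a number field and then embed it into a $p$-adic completion $\bC_p$ for an appropriately chosen prime $p$. This reduces matters to the setting of a $p$-adic field with good reduction: the $f_i$ have coefficients in the ring of integers $\fo$ of a local field and $\alpha\in\fo^g$.

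Next I would isolate a single residue class in which an orbit tail lives. By pigeonhole, the reduction $\bar{\mathcal{P}}^n(\bar\alpha)$ modulo the maximal ideal is eventually periodic, so after replacing $\alpha$ by $\mathcal{P}^{N}(\alpha)$ and $\mathcal{P}$ by $\mathcal{P}^{M}$ for suitable $N,M$, I may assume the entire orbit lies in a single $p$-adic residue disk $D$ and that $\mathcal{P}(D)\subseteq D$. Moreover, after a further iterate, I may arrange that each coordinate map is of the form $X\mapsto X+p^{k}g(X)$ on $D$ with $k$ large enough to make the map lie in the range where $p$-adic exponentiation and logarithm converge.

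The heart of the argument is the construction of a $p$-adic analytic flow $\Phi:\bZ_p\to\bA^g$ such that $\Phi(n)=\mathcal{P}^n(\alpha)$ for all $n\in\bN$. For a single polynomial $f$ fixing a point $\beta$, this amounts to conjugating $f$ near $\beta$ to its linear (or, in the parabolic case, normal) form and then taking the $t$-th iterate via the $p$-adic logarithm of the linearizing coordinate. Once the flow exists, the image under $\Phi$ of each defining equation of $V$ is a $p$-adic analytic function on $\bZ_p$; by the Weierstrass preparation theorem such a function has only finitely many zeros unless it vanishes identically. If it vanishes identically then the image of $\Phi$ lies in $V$, and by Zariski-density of $\{\mathcal{P}^n(\alpha)\}_n$ in the Zariski closure of the flow, $V$ would contain a positive-dimensional $\mathcal{P}$-periodic subvariety, contradicting the hypothesis.

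The main obstacle, and the reason the conjecture is hard in general, is Step~3: constructing the analytic flow uniformly for arbitrary polynomial dynamics. In the toric case $f_i(X)=X^{e_i}$ one has the genuine $p$-adic logarithm on $\bG_m$; for a general polynomial the required conjugation to a linearized form involves formal power series whose denominators must be controlled in order to converge on a nontrivial $p$-adic disk. In particular the ramified case, where the multiplier at a periodic point is a root of unity with $|{\text{multiplier}}-1|_p<1$ only after passing to a wildly ramified extension, and the superattracting case where the multiplier is $0$, each require a separate analytic construction. I expect that for polynomials of degree $\geq 2$ the local contraction toward periodic points in the good-reduction disk will supply the needed convergence, but packaging this into a single $p$-adic logarithm working for all of $f_1,\dots,f_g$ simultaneously is the crux of the proof.
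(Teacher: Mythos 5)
The statement you are proving is labeled a \emph{conjecture} in the paper, and the paper does not claim a proof of it. What the paper actually establishes is the much more restricted Theorem~\ref{polydyn}: the $p$-adic case where each coordinate map $P_i$ has an attracting (but not superattracting) fixed point $\alpha_i$, all with the \emph{same} multiplier $a_1$, and where the conclusion is only asserted for starting points $(x_1,\dots,x_g)$ lying sufficiently close to $(\alpha_1,\dots,\alpha_g)$. You correctly identify the $p$-adic Skolem--Chabauty--Coleman framework and you correctly flag the obstacles (superattracting multipliers, root-of-unity multipliers, simultaneity across coordinates), so the honest last paragraph of your sketch is well taken: those are precisely the reasons the paper proves only a special case.

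That said, there are two substantive points where your sketch departs from what the paper actually does, and one of them is a real gap. First, the reduction to coordinate maps of the form $X\mapsto X+p^k g(X)$ puts you in the \emph{indifferent} (quasi-unipotent) regime where the multiplier is $\equiv 1\pmod{p}$, which is exactly the regime in which a genuine $p$-adic flow $\Phi:\bZ_p\to\bA^g$ interpolating $n\mapsto\cP^n(\alpha)$ can be built, since $a_1^t$ then makes sense for $t\in\bZ_p$. The paper instead works in the \emph{attracting} regime $0<|a_1|<1$, where $|a_1-1|=1$ and $a_1^t$ does \emph{not} extend to $\bZ_p$, so no such flow exists; after the K\"onigs linearization of Proposition~\ref{expo}, the paper parametrizes the orbit closure by the \emph{value of the first coordinate} $u=P_1^n(x_1)$ and writes the remaining coordinates as $\exp_{P_i}\bigl(\lambda_i\log_{P_1}(u)\bigr)$, with $\lambda_i$ constant precisely because all the multipliers coincide. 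These are genuinely different analytic devices for genuinely different dynamical cases, and your sketch conflates them. Second, the opening specialization from $\bC$ to a finitely generated field to $\bQ_p$ does not control the local dynamics: nothing guarantees that the chosen embedding makes the relevant periodic point attracting, or makes the multiplier a $1$-unit, or makes the multipliers of the several $P_i$ agree. That gap is exactly the distance between Conjecture~\ref{dynamical M-L} and Theorem~\ref{polydyn}, and it is left unresolved in your sketch.
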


We also propose the following more general conjecture for the
structure of the intersection of an affine variety with a polynomial
orbit. We let $\cO_{\cP}((a_1,\dots,a_g))$ denote the $\cP$-orbit of
$(a_1,\dots,a_g)\in \bA^g(\bC)$.
\begin{conjecture}
\label{dynamical M-L 2}
With the above notation, each subvariety $V$ of $\bA^g$ defined over $\bC$
intersects $\cO_{\cP}((a_1,\dots,a_g))$ in a finite union of orbits of the form $\cO_{\cP^N}(\cP^{\ell}(a_1,\dots,a_g))$, for some non-negative integers $N$ and $\ell$.
\end{conjecture}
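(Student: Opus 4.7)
The plan is to reduce Conjecture~\ref{dynamical M-L 2} to a $v$-adic analytic statement and then adapt the Skolem-Chabauty-Coleman strategy to polynomial dynamics. By a standard specialization argument I would first replace $\bC$ by a number field $K$ containing the coefficients of the $f_i$, the coordinates of $(a_1,\dots,a_g)$, and defining equations of $V$. Embedding $K$ into a carefully chosen non-archimedean completion $K_v$, I would work inside $\bA^g(K_v)$. The orbit $\cO_\cP((a_1,\dots,a_g))$ meets only finitely many residue disks of $\bA^g(K_v)$, so for some positive integer $N$ the iterate $\cP^N$ preserves each of them, and $\cO_\cP((a_1,\dots,a_g))=\bigcup_{\ell=0}^{N-1}\cO_{\cP^N}(\cP^\ell(a_1,\dots,a_g))$. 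It therefore suffices to analyze each $\cP^N$-suborbit inside its own residue disk.

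The key step is to build, for each coordinate polynomial $f_i$, a $v$-adic analytic ``logarithm'' $L_i$ on the relevant residue disk that conjugates $f_i^N$ to translation by a fixed scalar in $K_v$. A natural construction: after enlarging $N$ (and possibly $K$), choose a $v$-adically attracting fixed point $\alpha_i$ of $f_i^N$ with multiplier $\lambda_i:=(f_i^N)'(\alpha_i)$ satisfying $0<|\lambda_i|_v<1$, produce the $v$-adic Koenigs linearization $\kappa_i$ near $\alpha_i$ (the unique analytic germ with $\kappa_i(\alpha_i)=0$, $\kappa_i'(\alpha_i)=1$, $\kappa_i(f_i^N(z))=\lambda_i\kappa_i(z)$), and set $L_i:=\log_v\circ\,\kappa_i$, so that $L_i(f_i^N(z))=L_i(z)+\log_v(\lambda_i)$. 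Under the componentwise map $(L_1,\dots,L_g)$ any $\cP^N$-orbit in the residue disk becomes an arithmetic progression in $K_v^g$, and the inclusion $\bZ\hookrightarrow\bZ_p$ (with $p$ the residue characteristic of $v$) extends this to a $v$-adic analytic parametrization $\Psi:\bZ_p\to\bA^g(K_v)$ of the suborbit.

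The equations defining $V$ pull back under $\Psi$ to finitely many $v$-adic analytic functions of a single parameter $n\in\bZ_p$. By Strassman's theorem each such function either vanishes identically on $\bZ_p$ or has only finitely many zeros there. In the first case the entire suborbit $\cO_{\cP^N}(\cP^\ell(a_1,\dots,a_g))$ is contained in $V$; in the second case the suborbit meets $V$ in finitely many points, and each such isolated point is the single-element orbit $\cO_{\cP^0}(\cP^j(a_1,\dots,a_g))$ for a suitable $j$. Reassembling these contributions across the finitely many residue-disk classes yields the claimed decomposition.

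The main obstacle is the construction of the logarithms $L_i$. Selecting a non-archimedean $v$ so that every $f_i$ admits a $v$-adically attracting periodic point with $0<|\lambda_i|_v<1$, ruling out both indifferent multipliers $|\lambda_i|_v=1$ and super-attracting ones $\lambda_i=0$, and ensuring that the resulting Koenigs series converges on a full residue disk of good reduction, is the substantive analytic step that has no counterpart in the classical abelian or semiabelian setting, where the formal group law supplies the logarithm for free. Handling super-attracting $f_i$ would require replacing $\kappa_i$ with a $v$-adic B\"ottcher coordinate satisfying $\phi_i(f_i^N(z))=\phi_i(z)^{d_i}$, whose effect after taking $\log_v$ is multiplication by $d_i$ rather than translation; this changes the shape of the Strassman-type bounds and is presumably the technical core of the paper's $p$-adic main result.
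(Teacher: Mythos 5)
The statement you are attempting to prove is stated in the paper as an \emph{open conjecture}; the paper does not prove it. What the paper actually proves is Theorem~\ref{polydyn}, a very special case: the orbit must start in a small disk around a point $(\alpha_1,\dots,\alpha_g)$ where each $\alpha_i$ is an attracting but not superattracting fixed point of $P_i$, and all multipliers $P_i'(\alpha_i)$ are required to be \emph{equal} to a common $a_1$. Your proposal is in the right general spirit (Skolem--Chabauty via $p$-adic linearization), but several of the steps you sketch are exactly the obstacles the paper cannot overcome and hence works around by hypotheses.

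Concretely, two steps fail. First, your parametrization $\Psi:\bZ_p\to\bA^g(K_v)$ does not exist. You need $n\mapsto\lambda_i^n$ to extend to a $v$-adic analytic function of $n\in\bZ_p$, but because $|\lambda_i|_v<1$ (attracting), $\lambda_i$ is not a $v$-adic unit; in the classical Skolem--Mahler--Lech argument one can extend $\lambda^n$ precisely because $\lambda$ is a unit and one may replace $\lambda$ by $\lambda^N\equiv 1\pmod{\mathfrak m_v}$, and this mechanism is unavailable here. Writing $L_i=\log_v\circ\kappa_i$ does give an arithmetic progression in the \emph{image}, but you cannot go back through the $p$-adic exponential since $n\log_v(\lambda_i)$ leaves its radius of convergence. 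The paper sidesteps this by parametrizing not by $n$ but by the point $u=P_1^n(x_1)$ itself, noting $\log_{P_1}(u)=a_1^n\log_{P_1}(x_1)$, and using that the orbit \emph{accumulates} at $\alpha_1$, so Lemma~\ref{zeros} (accumulation of zeros) replaces Strassman's theorem. For this to work, the ratio $\lambda_i=\log_{P_i}(P_i^n(x_i))/\log_{P_1}(P_1^n(x_1))$ must be independent of $n$, which is exactly why the paper assumes all the $\alpha_i$ share a common multiplier $a_1$ — a hypothesis your proposal omits and which you cannot arrange in general. Second, you cannot always choose $v$ and $N$ so that the cycle through the residue disk has $0<|\lambda_i|_v<1$: with good reduction the reduced multiplier is generically a unit, giving indifferent behavior, and then the orbit does not converge to a fixed point and the accumulation argument (and the convergence of the K\"onigs series to a full residue disk) both break down. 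These are precisely the reasons the paper states Conjecture~\ref{dynamical M-L 2} as open and proves only Theorem~\ref{polydyn}.
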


Conjecture~\ref{dynamical M-L} is an easy corollary of
Conjecture~\ref{dynamical M-L 2}. Indeed, if the intersection
$V(\C)\cap\cO_{\cP}((a_1,\dots,a_g))$ is infinite, then there exists
an infinite orbit $\cO_{\cP^N}(\cP^{\ell}(a_1,\dots,a_g))$ contained
in $V$, and the Zariski closure of
$\cO_{\cP^N}(\cP^{\ell}(a_1,\dots,a_g))$ contains a positive
dimensional subvariety of $V$ invariant under $\cP^N$. Note that
Conjecture~\ref{dynamical M-L 2} says that if $S$ is the set of
non-negative integers $n$ for which $\cP^n(a_1, \dots, a_g)$ lies in $V$,
then $S$ is a finite union of translates of semigroups of $\mathbb{N}$.

Conjectures~\ref{dynamical M-L} and \ref{dynamical M-L 2} fit into Zhang's far-reaching system of dynamical
conjectures \cite{ZhangLec}.  Zhang's conjectures include dynamical
analogues of the Manin-Mumford and Bogomolov conjectures for abelian
varieties (now theorems of Raynaud \cite{Raynaud1, Raynaud2}, Ullmo \cite{Ullmo},
and Zhang \cite{Zhang}), as well as a conjecture about the Zariski
density of orbits of points under fairly general maps from a
projective variety to itself.  The latter conjecture is related to our
Conjecture~\ref{dynamical M-L}, though neither conjecture contains the
other. 

Conjecture~\ref{dynamical M-L 2} is proved in \cite{endomorphism} when each $f_i\in\Qbar[X]$, and $\deg(f_i) \le 1$. We also note that in \cite{Mike},
Conjecture~\ref{dynamical M-L} is proved in the special case where
$g=2$ and $V$ is a line in $\bA^2$.

An analog of our Conjecture~\ref{dynamical M-L 2} for the additive group
of positive characteristic under the action of an additive polynomial
associated to a Drinfeld module has been previously studied (see
\cite{Denis-conjectures}, \cite{IMRN}, \cite{full-ml-drinfeld} and
\cite{compositio}). However, Conjecture~\ref{dynamical M-L} seems more
difficult, since the proofs over Drinfeld modules make use of the fact
that the polynomials there are additive; in particular, this allows
for the definition of a logarithm-like map that is defined on all of
$\bG_a$.

In the present paper we prove a first result towards
Conjecture~\ref{dynamical M-L}, one that is valid for polynomials
defined over the $p$-adics. The idea behind the proof of our
Theorem~\ref{polydyn} can be explained quite simply. Assuming that an
affine variety $V\subset\bG_a^g$ has infinitely many points in common
with an orbit $\cO$ of a point which lies in a sufficiently small
neighborhood of an attracting fixed point for $\cP$, we can find then
a positive dimensional subvariety of $V$ which is invariant under
$\cP$. Indeed, applying the $p$-adic logarithmic map associated to
$\cP$ (see Proposition~\ref{expo}) to $\cO$ yields a line in the
vector space $\bC_p^g$.  Each polynomial $f$ that vanishes on $V$,
then gives rise to an analytic function $F$ on this line (by composing
with the $p$-adic exponential function associated to $\cP$).  Because
we assumed there are infinitely many points in $V \cap \cO$, the zeros
of $F$ must have an accumulation point on this line, which means that
$F$ vanishes identically on the line (by Lemma~\ref{zeros}).  The
Zariski closure of this analytic line is a subvariety of $V$ which is
$\cP$-invariant.  The inspiration for this idea comes from the method
employed by Chabauty in \cite{Chabauty} (and later refined by Coleman
in \cite{Coleman}) to study rational points on curves in abelian
varieties with low Mordell-Weil rank.  Our technique also bears a
resemblance to Skolem's method for treating diophantine equations (see
\cite[Chapter 4.6]{BS}).

We briefly sketch the plan of our paper. In Section~\ref{notation} we
set up the notation and state our main result (Theorem~\ref{polydyn}).
Section~\ref{p-adic} is devoted to proving several lemmas for $p$-adic
logarithms associated to (one variable) polynomial maps on $\bC_p$;
these lemmas are used in the proof of Theorem~\ref{polydyn}.  After
that, in Section~\ref{proofs} we complete the proof of
Theorem~\ref{polydyn}.

\section{Notation}
\label{notation}

Let $p$ be a prime number and let $\bC_p$ be the completion of an
algebraic closure of the field of $p$-adic numbers $\bQ_p$. We let $|\cdot |$ be the absolute value on $\bC_p$. For
$\alpha\in\bC_p $ and a real number $R>0$, we let
$$B(\alpha;R):=\{z\in\bC_p\text{ : } |z-\alpha|  <R\}.$$
An \emph{isometry} between $B(\alpha_1;R)$ and $B(\alpha_2;R)$ is a map 
$$\psi:B(\alpha_1;R)\lra B(\alpha_2;R)$$ 
such that for each $z\in B(\alpha_1;R)$, we have
$$|\psi(z)-\alpha_2| = |z-\alpha_1|.$$

For every polynomial $P\in \bC_p[X]$, and for every $n\in\bN$, we let
$P^n$ denote the $n$-th iterate of $P$; that is, we let
$$P^n:=P\circ P\circ \dots \circ P\text{ ($n$ times).}$$

We call $\alpha\in\bC_p$ a \emph{preperiodic} point for $P$ if there
exist non-negative integers $n \not= m$ such that $P^n(\alpha) =
P^m(\alpha)$.

Let $g\ge 2$, and let $P_1,\dots,P_g \in \bC_p[X]$. We denote as $\cP$
the action of $(P_1,\dots,P_g)$ on $\bA^g$ coordinatewise. For each
$(x_1,\dots,x_g)\in \bC_p^g$, we define the \emph{($\cP$-) orbit} of
$(x_1,\dots,x_g)$ be
$$\cO_{\cP}((x_1,\dots,x_g)):=\{\left(P^n_1(x_1),\dots,P^n_g(x_g)\right)  \text{ : } n\ge 0\}.$$ 

We will use the following classical definition from complex dynamics.
\begin{definition}
\label{definition attracting fixed point}
Let $P$ be a polynomial in $\bC_p[z]$.  We call $\alpha\in \bC_p$ a
{\bf fixed point} of $P$ if $P(\alpha) = \alpha$.  We say that a fixed
point $\alpha$ is an {\bf attracting} fixed point for $P$ if $0 <
|P'(\alpha)| < 1$. If $P'(\alpha) = 0$ for a fixed point $\alpha$,
then we call $\alpha$ a {\bf superattracting} fixed point. We call
$P'(\alpha)$ the {\bf multiplier} of $\alpha$.
\end{definition}

The following is our main result.

\begin{theorem}
\label{polydyn}
With the above notation for $\cP$, let $(\alpha_1,\dots,\alpha_g)\in
\bC_p^g$, and assume for each $i$ that $\alpha_i$ is an attracting
(but not superattracting) fixed point for $P_i$. In addition, suppose
that the $\alpha_i$ have the same multiplier $a_1$.  Let $V$ be an affine subvariety of $\mathbb{A}^g$
defined over $\bC_p$. Assume that $V$ does not contain a positive
dimensional subvariety invariant under the $\cP$-action. Then there
exists $R>0$ such that for any $(x_1,\dots,x_g)\in\bC_p^g$ that
satisfies $\max_{i=1}^g|x_i-\alpha_i| < R$, the intersection $V(\bC_p)\cap \cO_{\cP}((x_1,\dots,x_g))$ is finite.
\end{theorem}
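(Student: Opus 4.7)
The plan is to follow the outline sketched in the introduction, realizing a $p$-adic Skolem--Chabauty--Coleman strategy in the dynamical setting. First I would invoke Proposition~\ref{expo} to produce, for some $R>0$ depending only on the $P_i$, analytic ``logarithm'' isometries $\psi_i : B(\alpha_i;R) \to B(0;R)$ sending $\alpha_i \mapsto 0$ and satisfying the conjugation identity $\psi_i(P_i(z)) = a_1 \cdot \psi_i(z)$ for all $z \in B(\alpha_i;R)$. Such $\psi_i$ exist because each $\alpha_i$ is attracting but not superattracting, and the common multiplier $a_1$ guarantees the same linear target dynamics in every coordinate. Suppose, toward a contradiction, that for some $(x_1, \dots, x_g)$ with $\max_i |x_i - \alpha_i| < R$ the intersection $V(\bC_p) \cap \cO_{\cP}((x_1, \dots, x_g))$ is infinite. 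Setting $u_i := \psi_i(x_i) \in B(0;R)$, at least one $u_i$ must be nonzero (otherwise the orbit is the single point $(\alpha_1,\dots,\alpha_g)$), and the orbit in log-coordinates lies on the line $s \mapsto (s u_1, \dots, s u_g)$, namely
\[
\cP^n(x_1,\dots,x_g) \;=\; \bigl(\psi_1^{-1}(a_1^n u_1),\, \dots,\, \psi_g^{-1}(a_1^n u_g)\bigr).
\]

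Next, for every polynomial $f \in \bC_p[X_1, \dots, X_g]$ vanishing on $V$, I would form the one-variable function
\[
F(s) \;:=\; f\bigl(\psi_1^{-1}(s u_1),\, \dots,\, \psi_g^{-1}(s u_g)\bigr),
\]
a $p$-adic power series in $s$ convergent on the disk $D := \{s \in \bC_p : |s| < R/\max_i |u_i|\}$. By construction, $F(a_1^n) = f(\cP^n(x_1, \dots, x_g))$ for each $n \ge 0$, so $F$ has infinitely many zeros among the $a_1^n$, which accumulate at $s=0$ because $|a_1| < 1$. Lemma~\ref{zeros} (the $p$-adic identity principle) then forces $F \equiv 0$ on $D$.

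Running this argument over all $f$ in the ideal of $V$ shows that the entire analytic arc
\[
C \;:=\; \bigl\{\bigl(\psi_1^{-1}(su_1),\, \dots,\, \psi_g^{-1}(su_g)\bigr) : s \in D\bigr\}
\]
is contained in $V(\bC_p)$. Let $W$ be the Zariski closure of $C$ in $\bA^g$. Since $C$ is infinite (some $u_i \neq 0$ and $\psi_i^{-1}$ is an isometry), $\dim W \ge 1$. Moreover, the linearization gives $\cP(C) \subseteq C$ (in the parameter, the action is simply $s \mapsto a_1 s$), and then applying the algebraic map $\cP$ to the Zariski closure yields $\cP(W) \subseteq W$. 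Thus $W \subseteq V$ is a positive-dimensional $\cP$-invariant subvariety, contradicting the hypothesis on $V$.

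The main obstacles are packaged into Proposition~\ref{expo} and Lemma~\ref{zeros}: producing a uniform radius $R$ on which a Koenigs-style linearization converges simultaneously for each $P_i$ (this is where the non-superattracting and equal-multiplier hypotheses are essential, to ensure a common linear model), and verifying the identity principle for $p$-adic power series on a disk. With those ingredients in hand, the rest of the argument is the transcription of Skolem--Chabauty--Coleman sketched above. One small bookkeeping point is that $R$ must be chosen independently of $(x_1, \dots, x_g)$, but so that $\psi_i^{-1}$ is defined on all of $B(0;R)$; this is automatic once $R$ is built out of the $P_i$ alone.
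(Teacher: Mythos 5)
Your proof is correct and follows essentially the same Chabauty--Coleman strategy as the paper, differing only in how the one-variable analytic function is parametrized. The paper first reorders coordinates so that $\left|\log_{P_1}(P_1^{n_0}(x_1))\right|$ is maximal, then parametrizes by the first coordinate $u$, introducing the ratios $\lambda_i = \log_{P_i}(P_i^{n_k}(x_i))/\log_{P_1}(P_1^{n_k}(x_1))$ (showing they are independent of $k$) and setting $F(u) = f\bigl(u,\exp_{P_2}(\lambda_2\log_{P_1}(u)),\dots\bigr)$. You instead parametrize directly by the scalar $s$ along the line $s\mapsto (su_1,\dots,su_g)$ in log-coordinates, taking $F(s)=f\bigl(\psi_1^{-1}(su_1),\dots,\psi_g^{-1}(su_g)\bigr)$ with $s$ ranging over the multipliers $a_1^n$. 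This is tidier: it sidesteps the coordinate reordering, the $\lambda_i$ bookkeeping, and the verification that $|\lambda_i|\le 1$. The two are literally related by the substitution $u = \psi_1^{-1}(su_1)$, so they see the same analytic arc.

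Two small points. First, the linearization you need (an isometry $\psi_i$ on $B(\alpha_i;R)$ with $\psi_i(\alpha_i)=0$ and $\psi_i\circ P_i = a_1\psi_i$) is the content of Proposition~\ref{extension-trick}, which reduces the general $\alpha_i$ to the case $\alpha=0$ of Proposition~\ref{expo}; citing Proposition~\ref{expo} directly is a slight misattribution but you clearly describe the right statement. Second, you conclude $\cP(W)\subseteq W$ and immediately call $W$ ``$\cP$-invariant,'' while the paper shows $\cP(V_0)=V_0$. If invariance is taken to mean equality, you should add a sentence: since $\cP$ is a finite morphism the decreasing chain $\cP^n(W)$ consists of closed subvarieties of constant dimension $\dim W\ge 1$, hence stabilizes by Noetherianity to a positive-dimensional $W'$ with $\cP(W')=W'$. (Alternatively: the Zariski closure of $\cP(C)=\Phi(a_1 D)$ coincides with that of $C=\Phi(D)$ because any polynomial vanishing on $\Phi(a_1D)$ pulls back to an analytic function vanishing on the open subdisk $a_1D$, hence on all of $D$; then finiteness of $\cP$ gives $\cP(W)=W$.) Either patch is immediate, so this is a cosmetic rather than substantive gap.
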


\section{$p$-adic logarithms}
\label{p-adic}

We will prove Theorem~\ref{polydyn} after we develop a theory of
$p$-adic logarithms associated to polynomials defined over $\bC_p$.
We begin with a variant of the classical K\"{o}nigs linearization of a
polynomial $P$ at an attracting (but not superattracting) fixed point
$\alpha$ (see Theorem $8.2$ in \cite{Milnor}, or Theorem $2.1$ in
\cite[Chapter 2]{Carleson-Gamelin}). In both of those books, the
result is proved over the complex numbers, and under the hypothesis
that $0 < |P'(\alpha)| < 1$.  By contrast, our result is over the
$p$-adics and is less restrictive in that we only require $P'(\alpha)$
be neither $0$ nor a root of unity. Our proof is also different than
the proofs from the above mentioned books.

We note that we get a {\it convergent} power series for $\exp_P$ as long
as $P'(\alpha)$ is neither $0$, nor a root of unity.  The reason roots of unity are a problem is that we divide
out by $P'(\alpha)^n - 1$ for various $n$ when we are solving for the
coefficients of $\exp_P$.  In order to control the size of the
coefficients, we will need a lemma on the size of $|P'(\alpha)^n - 1|$.
We begin with the following.

\begin{lemma}\label{schin}
  Let $\beta \in \bC_p$ have the property that $|\beta - 1|  <
  |p| $.  Then for all positive integers
  $n$, we have
$$ |\beta^n - 1|  = |\beta - 1|  \cdot |n| .$$
\end{lemma}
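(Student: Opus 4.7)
The plan is an elementary binomial expansion combined with the ultrametric (strong triangle) inequality. Writing $\beta = 1 + \gamma$ with $|\gamma| = |\beta - 1| < |p|$, the binomial theorem gives
$$\beta^n - 1 \;=\; \gamma \left( n + \sum_{k=2}^{n} \binom{n}{k} \gamma^{k-1} \right).$$
The whole statement then reduces to showing that each term $\binom{n}{k}\gamma^{k-1}$ with $k \geq 2$ has $p$-adic absolute value \emph{strictly} less than $|n|$. Once that is established, the strong triangle inequality pins the parenthesized expression at $|n|$ exactly, yielding $|\beta^n - 1| = |\gamma|\cdot|n| = |\beta - 1|\cdot |n|$.

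For the term-by-term estimate, I would use the identity $\binom{n}{k} = \tfrac{n}{k}\binom{n-1}{k-1}$ together with the fact that $\binom{n-1}{k-1}\in\bZ$ has $p$-adic absolute value at most $1$; this gives $\left|\binom{n}{k}\right| \leq |n|/|k|$, so it suffices to prove $|\gamma|^{k-1} < |k|$ for all $k \geq 2$. Translating into valuations (writing $|p| = p^{-1}$ and $|k| = p^{-v_p(k)}$), the desired inequality becomes $(k-1)\,v_p(\gamma) > v_p(k)$.

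The final ingredient is the elementary bound $v_p(k) \leq k - 1$ for $k \geq 2$, which is immediate from $v_p(k) \leq \log_p k \leq \log_2 k \leq k - 1$. Combined with the strict hypothesis $v_p(\gamma) > 1$, this yields
$$(k-1)\,v_p(\gamma) \;>\; k-1 \;\geq\; v_p(k),$$
so $|\binom{n}{k}\gamma^{k-1}| < |n|$ for every $k \geq 2$, as required. I do not anticipate a genuine obstacle here; the only subtlety is to propagate the \emph{strict} inequality $|\gamma| < |p|$ through the $(k-1)$-st power so that the comparison with $|k|$ still beats the worst case, which occurs when $k$ is a power of $p$ (for instance $p = k = 2$, where $v_p(k) = k-1$ with equality). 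Strictness of the hypothesis is exactly what buys the strict inequality needed for the ultrametric argument to give equality rather than merely an upper bound.
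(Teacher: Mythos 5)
Your argument is correct, and it takes a genuinely different route from the paper. The paper proceeds by induction on $v_p(n)$: for the base case $p \nmid n$ it factors $\beta^n - 1 = (\beta - 1)(1 + \beta + \cdots + \beta^{n-1})$ and rewrites the geometric sum as $n + \sum_{j=1}^{n-1}(\beta^j - 1)$, concluding by the ultrametric inequality; for the inductive step it writes $n = p n'$, factors $\beta^n - 1 = (\beta^{n'})^p - 1$ the same way, and invokes the inductive hypothesis for $n'$. Your proof instead avoids induction entirely: you expand $\beta^n - 1 = \sum_{k=1}^{n} \binom{n}{k}\gamma^k$ via the binomial theorem and show the $k = 1$ term dominates, using the Kummer-style identity $\binom{n}{k} = \tfrac{n}{k}\binom{n-1}{k-1}$ to get $|\binom{n}{k}| \le |n|/|k|$, together with the elementary bound $v_p(k) \le k - 1$ and the strict hypothesis $v_p(\gamma) > 1$ to force $|\gamma|^{k-1} < |k|$. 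Your approach is more self-contained and makes visible exactly why the strict hypothesis $|\beta - 1| < |p|$ is needed (the boundary $p = k = 2$ gives $v_p(k) = k - 1$ with equality), whereas the paper's approach is structurally cleaner, reducing the general case to the tame case $p \nmid n$ by peeling off one factor of $p$ at a time via the geometric series factorization. Both arguments are valid and roughly the same length.
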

\begin{proof}
We use induction on the maximal power of $p$ that divides $n$.  If
this power is zero, then we have
\begin{equation*}
\begin{split}
|\beta^n - 1|  & = |\beta - 1| \cdot |1 + \beta + \dots + \beta^{n-1}|  \\
& = |\beta - 1| \cdot \big| 0 + (\beta - 1) + \dots + (\beta^{n-1} -
  1) + n \big|  \\
& = |\beta - 1| \cdot |n| ,
\end{split}
\end{equation*}
since $|\beta^j - 1|  < |n| =1$ for all $j$.  To perform the
inductive step, we let $n' = n/p$ and similarly obtain $|\beta^n -
1|  = |\beta^{n'} - 1| \cdot |p| $, since $|(\beta^{n'})^j - 1|  <
|p| $ for all $j$.  Then, by the inductive hypothesis, we have
$$|\beta^n - 1|  = |(\beta^{n'})^p - 1| =  |\beta-1| |n'|\cdot |p|  = |\beta-1| \cdot |n| ,$$
as desired.
\end{proof} 

The following Lemma is an immediate consequence of Lemma~\ref{schin}.
Schinzel (\cite[Lemma 3]{Schinzel}) uses a similar lemma but only
proves it over number fields.
\begin{lemma}
\label{basic_fact_schinzel}
Assume $b\in \bC_p\setminus\{0\}$ is not a root of unity. Then there
exists $0<C\le 1$ such that for every $n>1$, we have $|b^n-b|  \ge
C\cdot |n-1| $.
\end{lemma}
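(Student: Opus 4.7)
The plan is to reduce to proving a lower bound $|b^N - 1| \ge C' \cdot |N|$ for every $N \ge 1$, since $|b^n - b| = |b|\cdot|b^{n-1} - 1|$ and one may take $N = n-1$.  When $|b| > 1$ we have $|b^N - 1| = |b|^N > 1 \ge |N|$, and when $|b| < 1$ we have $|b^N - 1| = 1 \ge |N|$, so the essential case is $|b| = 1$.

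Suppose $|b| = 1$.  Let $k$ denote the multiplicative order of the reduction $\bar b$ in $\overline{\bF}_p^{\times}$; this $k$ exists and is coprime to $p$, so $|k| = 1$.  If $k \nmid N$ then $\bar b^N \neq 1$ and $|b^N - 1| = 1 \ge |N|$.  Otherwise I write $N = kN'$ (so $|N'| = |N|$) and set $c := b^k$, reducing the problem to the following: given $c \in \bC_p$ with $|c - 1| < 1$ and $c$ not a root of unity, find $C_0 > 0$ such that $|c^{N'} - 1| \ge C_0\cdot|N'|$ for every $N' \ge 1$.

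To invoke Lemma~\ref{schin} I first exhibit $j \ge 0$ with $|c^{p^j} - 1| < |p|$.  Expanding $(1+y)^p - 1$ with $y = c-1$ and using $|\binom{p}{i}| = |p|$ for $1 \le i \le p-1$ yields
\[
|c^p - 1| \;\le\; \max\bigl(|p|\cdot|c - 1|,\;|c - 1|^p\bigr),
\]
and iterating this bound forces $|c^{p^j} - 1| \to 0$, so such a $j$ exists.  Now decompose $N' = p^s M$ with $\gcd(M,p) = 1$.  If $s \ge j$, Lemma~\ref{schin} applied to $\beta = c^{p^j}$ with exponent $p^{s-j}M$ gives $|c^{N'} - 1| = |c^{p^j} - 1|\cdot|N'|/|p|^j$.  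If $s < j$, I exploit the factorization $c^{N'} - 1 = (c^{p^s} - 1)\cdot\sum_{i=0}^{M-1}(c^{p^s})^i$; since $|c^{p^s} - 1| < 1$ and $|M| = 1$, the strong triangle inequality forces the second sum to have absolute value exactly $1$, so $|c^{N'} - 1| = |c^{p^s} - 1|$, and the ratio $|c^{p^s} - 1|/|p|^s$ over the finite set $s \in \{0,\dots,j-1\}$ admits a positive lower bound because $c$ is not a root of unity.  Taking the infimum of the contributions from both ranges yields $C_0$.

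The main technical hurdle is proving $|c^{p^j} - 1| \to 0$, which rests on the binomial estimate above together with a short dichotomy on whether $|c - 1|^{p-1}$ is smaller or larger than $|p|$; everything else is a clean case analysis organized by the $p$-adic valuation $v_p(N)$, and at the end one recovers $C = \min(1,|b|\cdot C_0\cdot \min(C_0,1))$ to ensure $0 < C \le 1$.
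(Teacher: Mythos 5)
Your proof is correct, and it supplies the details behind the paper's bare assertion that Lemma~\ref{basic_fact_schinzel} is ``an immediate consequence of Lemma~\ref{schin}''; the reduction to $|b|=1$, then to $c=b^k$ (where $k$ is the multiplicative order of $\bar b$ in $\overline{\bF}_p^{\times}$) so that $|c-1|<1$, then raising to a $p$-power to enter the regime $|\beta-1|<|p|$ where Lemma~\ref{schin} applies, with the finitely many small exponents handled separately via the factorization $x^M-1=(x-1)\sum_{i<M}x^i$, is exactly the expected route. The only nit is cosmetic: the closing formula $C=\min\bigl(1,|b|\cdot C_0\cdot\min(C_0,1)\bigr)$ is an odd combination, and the cleaner bookkeeping is $C=\min(1,|b|,C_0)$ (noting $C_0$ need not be $\le 1$), but this does not affect the validity of the argument.
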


We are now ready to prove the existence of our exponential map when
the fixed point $\alpha = 0$.

\begin{proposition}
\label{expo}
Let $P(X) =\sum_{i=1}^r a_iX^i \in \bC_p[X]$, where $a_1\ne 0$ is not
a root of unity.  Then there exists a power series $$\exp_{P}(X) = X +
c_2X^2 +\dots + c_nX^n +\dots \in \bC_p[[X]]$$
such that
$$P(\exp_P(X)) = \exp_P(a_1X)$$
and $\exp_P$ has a positive radius of
convergence.
\end{proposition}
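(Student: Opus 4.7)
To prove the proposition, the plan is to construct $\exp_P$ from the coefficient recursion produced by the functional equation, and then bound the growth of the coefficients via Lemma~\ref{basic_fact_schinzel} iterated over recursion trees.

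Write $\exp_P(X) = X + \sum_{n\ge 2} c_n X^n$ with undetermined $c_n \in \bC_p$. Substituting into $P(\exp_P(X)) = \exp_P(a_1 X)$ and comparing coefficients of $X^n$ for each $n\ge 2$ yields a relation of the form
\[
(a_1^n - a_1)\, c_n \;=\; \sum_{i=2}^{r} a_i \sum_{\substack{n_1+\cdots+n_i = n \\ n_j \ge 1}} c_{n_1} \cdots c_{n_i},
\]
whose right-hand side depends only on $c_1,\dots,c_{n-1}$. Since $a_1\ne 0$ and $a_1$ is not a root of unity, $a_1^n - a_1 \ne 0$ for every $n\ge 2$, so the recursion uniquely determines each $c_n$ starting from $c_1 = 1$, producing $\exp_P$ as a formal power series.

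For positive radius of convergence, I would iterate the ultrametric bound
\[
|c_n| \;\le\; \frac{A}{C\,|n-1|}\,\max_{i,(n_j)} |c_{n_1}|\cdots|c_{n_i}|,
\]
where $A = \max_{i\ge 2}|a_i|$ and $C > 0$ comes from Lemma~\ref{basic_fact_schinzel}, all the way down to the ``leaf values'' $|c_1| = 1$. Each iteration traces out a labeled rooted tree in which each internal node $m \in \{2,\dots,n\}$ contributes a factor of at most $A/(C\,|m-1|_p)$. Writing $|m-1|_p^{-1} = p^{v_p(m-1)}$, I would show by induction on $n$ that the maximum of $\sum v_p(m-1)$ over all such trees equals $v_p((n-1)!)$; the key inductive step is that the split $(a,b) = (n-1,1)$ is carry-free in base $p$, so $v_p((a-1)!)+v_p((b-1)!) = v_p((n-2)!)$, whence $v_p(n-1) + v_p((n-2)!) = v_p((n-1)!)$. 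Combining this with Legendre's bound $v_p((n-1)!) \le (n-1)/(p-1)$ gives $|c_n| \le K^{n-1}$ for $K = (A/C)\,p^{1/(p-1)}$, so $\limsup |c_n|^{1/n} \le K$ and $\exp_P$ has radius of convergence at least $1/K > 0$.

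The main obstacle is the tree estimate: verifying that the accumulated $p$-adic denominators appearing in iterations of the recursion never exceed $p^{v_p((n-1)!)}$. The naive exponential bound $|c_n|\le M^n$ cannot close step by step, because the factor $1/|n-1|$ can grow like $n$ along arithmetic progressions $n\equiv 1\pmod{p^k}$; what saves the argument is that the aggregate loss along any recursion tree is controlled by the single quantity $v_p((n-1)!)$, which grows only linearly in $n$ by Legendre's formula.
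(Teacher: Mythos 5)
Your setup and overall strategy coincide with the paper's: derive the recursion $(a_1^n-a_1)c_n = \sum_{i\ge 2} a_i \sum_{j_1+\cdots+j_i=n} c_{j_1}\cdots c_{j_i}$, invoke Lemma~\ref{basic_fact_schinzel} to control $|a_1^n-a_1|^{-1}$, and aim at the bound $|c_n| \le (\text{const})^n\,|(n-1)!|^{-1}$ followed by Legendre. The paper proves that bound by a clean one-step induction; you phrase the accounting in terms of recursion trees, which is in substance the same estimate unrolled.

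However, your sketch of the tree estimate establishes the wrong inequality. You want to bound the maximum over trees of $\sum_{\text{internal } m} v_p(m-1)$ \emph{from above} by $v_p((n-1)!)$. The ``key inductive step'' you give --- that the split $(n-1,1)$ is carry-free, hence $v_p(n-1)+v_p((n-2)!) = v_p((n-1)!)$ --- shows only that some tree \emph{attains} $v_p((n-1)!)$, i.e., the lower bound, which is irrelevant to convergence. What is actually needed, and what your proposal omits, is the bound for an \emph{arbitrary} split at the root: for $i\ge 2$ and $j_1+\cdots+j_i = n$ with $j_\ell\ge 1$, one needs
\[
v_p(n-1) + \sum_{\ell=1}^{i} v_p\!\left((j_\ell-1)!\right) \;\le\; v_p\!\left((n-1)!\right).
\]
This is nontrivial; it follows from the integrality of the multinomial coefficient $\tfrac{(n-i)!}{(j_1-1)!\cdots(j_i-1)!}$, which gives $\sum_\ell v_p((j_\ell-1)!)\le v_p((n-i)!)$, together with the observation that $i\ge 2$ implies $v_p(n-1)\le v_p\!\left(\tfrac{(n-1)!}{(n-i)!}\right)$. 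This multinomial-integrality step is exactly the crux of the paper's induction (it passes from \eqref{second equation} to \eqref{third equation}), and without it your tree estimate does not close. As a secondary point, the geometric factor $(A/C)^{n-1}$ is not an upper bound when $A/C<1$, since then the worst tree for the valuation count need not be the worst for the count of internal nodes; you should replace $A/C$ by $\max\{1,A/C\}$, or normalize as the paper does with $M=\min\{1,1/\max_i|a_i|\}$.
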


In particular, Proposition~\ref{expo} shows that for every $n\ge 1$, we have
\begin{equation}
\label{fundamental relation 1}
P^n(\exp_P(X)) = \exp_P(a_1^nX),
\end{equation}
as an identity of formal power series.
Also, we obtain that there exists a \emph{logarithmic} function
$\log_P$, which equals the inverse $\exp_{P}^{-1}$ of the
\emph{exponential} function from Proposition~\ref{expo}. Moreover, since the
radius of convergence for $\exp_P$ is positive, and because the
coefficient of the linear term of $\exp_P$ is equal to $1$, there
exists $r_0>0$ such that both $\exp_P$ and $\log_P$ are analytic
isometries on $B(0;r_0)\subset\bC_p$ (see Proposition~\ref{non-arch
  anal}).  Finally, using \eqref{fundamental relation 1}, we also
derive
\begin{equation}
\label{fundamental relation 2}
\log_P(P^n(X)) = a_1^n \log_P(X).
\end{equation}

\begin{proof}[Proof of Proposition~\ref{expo}.]
  We let $c_1=1$ and solve inductively for $c_n$ with $n \ge 2$ by
  equating the coefficient of $X^n$ in $P(\exp_P(X))$ with the
  coefficient of $X^n$ in $\exp_P(a_1X)$ (it is clear that the
  coefficient of $X$ in both power series must be equal to $a_1$). The
  coefficient of $X^n$ in $ P(\exp_P(X))$ must equal
  $$a_1 c_n + \sum_{i=2}^r a_i\cdot\sum_{\substack{j_1+\dots +j_i =n\\
      j_1,\dots,j_i\ge 1}}\left(\prod_{\ell=1}^i
    c_{j_{\ell}}\right).$$
  The coefficient of $X^n$ in $\exp_P(a_1X)$
  must equal $c_n a_1^n$. Thus, we may solve for $c_n$ by letting
\begin{equation}
\label{recursive relation for coefficients}
(a_1^n-a_1)c_n = \sum_{i=2}^r a_i\cdot\sum_{\substack{j_1+\dots +j_i =n\\ j_1,\dots,j_i\ge 1}}\left(\prod_{\ell=1}^i c_{j_{\ell}}\right).
\end{equation}
Note that since $a_1$ is neither $0$ nor a root of unity, we have
$a_1^n-a_1\ne 0$, so this equation does indeed have a solution for
$c_n$ in terms of $a_1$ and the $c_j$ for which $j < n$. This shows
the existence of the formal power series for the exponential function
$\exp_P$.

We now prove that this formal power series has a positive radius of
convergence in $\bC_p$. Let $M:=\min\{1,\frac{1}{\max_{i=2}^r
  |a_i| }\}$.  Since $a_1$ is neither a root of unity nor equal to
$0$, Lemma~\ref{basic_fact_schinzel} implies that there exists $0<C\le
1$ such that for each $n\ge 2$, we have $|a_1^n-a_1|  \ge C|n-1| $.
We show by induction on $n$ that $|c_n|  \le (C\cdot M)^{1-n}\cdot
|(n-1)!|^{-1} $ for each $n\ge 1$.

For $n=1$, we know that $c_1=1$, so the desired inequality holds.

Now, let $n\ge 2$. Assume that $|c_i| \le (C\cdot
M)^{1-i}|(i-1)!|^{-1} $ for each $i<n$; we will show that $|c_n| \le
(C\cdot M)^{1-n}|(n-1)!|^{-1} $.  Using \eqref{recursive relation for
  coefficients} and the ultrametric inequality, we conclude that
\begin{equation}
C|n-1| \cdot |c_n|  \le |a_1^n-a_1|  \cdot |c_n|  \le
\big(\max_{i=2}^r |a_i|  \big) \cdot \big(
  \max_{\substack{j_1+\dots +j_i =n\\ j_1,\dots,j_i\ge
      1}}\prod_{\ell=1}^i \left|c_{j_{\ell}}\right|  \big).
\end{equation}
Since $|a_i| \le \frac{1}{M}$, the induction hypothesis (applied to
the various $c_{j_{\ell}}$) above thus yields
\begin{equation}
\label{second equation}
C|n-1| \cdot |c_n|  \le \left(\frac{1}{M} \right)\cdot \left(\max_{i=2}^r (C\cdot
M)^{i-n}\right) \cdot \left( \max_{\substack{j_1+\dots+j_i=n \\ i\ge 2}}\prod_{\ell=1}^i
  \left|(j_{\ell}-1)!\right|^{-1}  \right).
\end{equation}
Both $C$ and $M$ are in $(0,1]$, so $\max_{i=2}^r (C\cdot
M)^{i-n}=(C\cdot M)^{2-n}$. For each $i,j_1,\dots,j_i$ such that
$j_1+\dots+j_i=n$, we have
$$\frac{(n-i)!}{(j_1-1)!\cdots (j_i-1)!}\in\mathbb{Z}.$$
This gives
$\prod_{\ell=1}^i |(j_i-1)!|^{-1}  \le |(n-i)!|^{-1} \le
|(n-2)!|^{-1} $ and, thus, \eqref{second equation} implies that
\begin{equation}
\label{third equation}
C|n-1| \cdot |c_n| \le C^{2-n}\cdot M^{1-n}\cdot |(n-2)!|^{-1} ,
\end{equation}
which completes the inductive step.

Since $|(n-1)!|^{-1}  \le |p| ^{-\frac{n-1}{p-1}} \le |p|^{-n}$, we conclude that $|c_n|  \le \left(C M|p|\right)^{-n}$. Hence $\exp_P$ is convergent in the ball $B\left(0;CM|p|\right)$.
\end{proof}

The following result is classical in non-archimedean analysis. For the sake of completeness, we provide its proof.
\begin{proposition}
\label{non-arch anal}
Let $f(X)= X + c_2 X^2 + \dots + c_n X^n + \dots\in\bC_p [[X]]$ be a power series convergent on a ball $B(0;r)$ of positive radius. Then there exists $r_0\in (0,r]$ such that for each $z\in B(0;r_0)$, we have $|f(z)|  = |z| $. Moreover, $f$ admits an analytic inverse function $f^{-1}$ on $B(0;r_0)$.
\end{proposition}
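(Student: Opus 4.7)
The plan is to exploit the fact that on a sufficiently small ball the linear term of $f$ dominates everything else, and then build the inverse either by explicitly bounding the coefficients of the formally inverted power series or by a contraction-mapping argument.

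First I would translate the convergence hypothesis into a Gauss-norm statement: since $f$ converges on $B(0;r)$, the sequence $|c_n|r^n$ tends to $0$, so $M:=\sup_{n\ge 1}|c_n|r^n$ is finite. I would then choose $r_0\in(0,r]$ small enough that $|c_n|r_0^{n-1}<1$ for every $n\ge 2$. This is possible because
\[
|c_n|r_0^{n-1}\ \le\ \frac{M}{r}\cdot\left(\frac{r_0}{r}\right)^{n-1},
\]
and the right-hand side, viewed as a function of $n\ge 2$, is maximized at $n=2$ (once $r_0\le r$); so any $r_0<\min(r,\,r^2/M)$ works.

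Next I would check the norm identity. For $|z|\le r_0$ and $n\ge 2$ we have $|c_n z^n|=|c_n|r_0^{n-1}|z|<|z|=|c_1z|$, so by the ultrametric inequality $|f(z)-z|<|z|$, forcing $|f(z)|=|z|$. The same estimate, applied to the factorization
\[
f(z_1)-f(z_2)=(z_1-z_2)\Bigl(1+\sum_{n\ge 2}c_n\bigl(z_1^{n-1}+z_1^{n-2}z_2+\cdots+z_2^{n-1}\bigr)\Bigr),
\]
shows that the bracketed factor has absolute value $1$, so $f$ is actually an isometry of $B(0;r_0)$ onto its image; in particular $f$ is injective there.

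To produce the analytic inverse I would construct the formal reversion $g(Y)=Y+d_2Y^2+\cdots\in\bC_p[[Y]]$ by solving $f(g(Y))=Y$ coefficient by coefficient (the recursion is uniquely solvable since $c_1=1$), and then estimate $|d_n|$ by induction in a manner parallel to the estimate for $c_n$ in the proof of Proposition~\ref{expo}, but simpler because the divisors $a_1^n-a_1$ are replaced by $1$. This yields a bound of the form $|d_n|\le C^{n-1}$ for a suitable constant $C$, hence a positive radius of convergence; shrinking $r_0$ further if necessary, $g$ converges on $B(0;r_0)$. Finally, the identities $f(g(Y))=Y$ and $g(f(X))=X$ hold as formal power series (standard reversion), and since both sides converge on the relevant balls they hold as analytic identities there, so $g=f^{-1}$ on $B(0;r_0)$.

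The only real obstacle is the coefficient bound for $g$; everything else is a direct ultrametric calculation. An alternative that avoids the reversion estimate altogether is a contraction-mapping approach: for each $y\in B(0;r_0)$ the map $T_y(x)=y-\sum_{n\ge 2}c_nx^n$ is a self-map of the complete ultrametric ball $B(0;r_0)$ (since each summand has absolute value $<r_0$) and is strictly contracting with constant $\max_{n\ge 2}|c_n|r_0^{n-1}<1$, so it has a unique fixed point $g(y)$ solving $f(g(y))=y$; the analyticity of $g$ in $y$ then follows from a standard non-archimedean implicit-function/Weierstrass-preparation argument, but I expect the direct coefficient bound to be the cleaner route to present.
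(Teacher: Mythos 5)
Your overall strategy is sound and reaches the same conclusion, but the route to the analytic inverse is genuinely different from the paper's, and there is one small technical slip at the start worth flagging.

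The slip: you write that convergence of $f$ on $B(0;r)$ forces $|c_n|r^n\to 0$, hence $M:=\sup_n|c_n|r^n<\infty$. In the paper's notation $B(0;r)$ is the \emph{open} ball, and convergence there only gives $\limsup_n|c_n|^{1/n}\le 1/r$; it does not control $|c_n|r^n$ (e.g.\ $|c_n|$ close to $n/r^n$ yields $\limsup|c_n|^{1/n}=1/r$ but $\sup|c_n|r^n=\infty$). The paper sidesteps this by first fixing an auxiliary $r_1<r$ and bounding $|c_n|<K/r_1^n$; your argument is repaired the same way, by replacing $r$ with any $r_1<r$ before forming $M$. Since your final $r_0$ is taken strictly less than $r$ anyway, nothing downstream changes.

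On the inverse: the paper proves the norm identity, then establishes surjectivity of $f$ onto $B(0;r_0)$ by a Newton polygon argument applied to $f(X)-w$, and finally invokes bijectivity together with the existence of the formal compositional inverse to declare $f^{-1}$ analytic. You instead prove $f$ is an isometry (hence injective) via the factorization of $f(z_1)-f(z_2)$, and obtain the analytic inverse by directly bounding the coefficients $d_n$ of the formal reversion by induction on the recursion $d_n=-\sum_{k\ge 2}c_k\sum_{j_1+\cdots+j_k=n}d_{j_1}\cdots d_{j_k}$, getting $|d_n|\le C^{n-1}$ and hence a positive radius of convergence; surjectivity then comes for free from $f\circ g=\mathrm{id}$. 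Your contraction-mapping alternative is also valid. Both approaches work; yours is more computational but arguably more self-contained, since the paper's final step (that a formal inverse of an analytic bijection on a ball is automatically analytic there) is asserted rather than argued. If you present your version, I'd recommend actually carrying out the induction on $|d_n|$ (it is short and parallels the estimate in the proof of Proposition~\ref{expo} with the factors $a_1^n-a_1$ absent), since that is where the content lies.
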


\begin{proof}
Let $0<r_1<r$. Then $\limsup_{n\to\infty} |c_n| ^{\frac{1}{n}} < \frac{1}{r_1}$. Therefore, there exists $N_1\in\mathbb{N}$ such that for every $n\ge N_1$, we have $|c_n| ^{\frac{1}{n}} < \frac{1}{r_1}$. In particular, there exists $K>0$ such that for \emph{all} $n\ge 2$, we have
\begin{equation}
\label{limsup 1}
|c_n|  < \frac{K}{r_1^n}.
\end{equation}
We can find $0<r_0<r_1$ such that for all $n\ge 2$, we have
\begin{equation}
\label{limsup 2}
r_0^{n-1} < \frac{r_1^n}{K}.
\end{equation}
We claim that for each $z\in B(0;r_0)$, we have $|f(z)| =|z| $. Indeed, for each $n\ge 2$, using \eqref{limsup 1}, \eqref{limsup 2} and that $|z|  < r_0$, we have
$$|c_nz^n|  < \frac{|z|^n\cdot K}{r_1^n} < |z| .$$
Hence $|f(z)| =|z| $, as desired. Moreover, for each $w \in B(0;r_0)$,
there exists $z\in B(0;r_0)$ such that $f(z) = w$ (see the first slope
of the Newton polygon for the polynomial $f(X) - w$). Thus
$f:B(0;r_0)\to B(0;r_0)$ is a bijection.

Since $f$ is a unit in the ring of formal power series, there exists a
formal power series $f^{-1}$ which is the inverse of $f$.
Furthermore, the fact that $f$ is one-to-one and onto $B(0;r_0)$ means
that $f^{-1}$ is well-defined and analytic on $B(0;r_0)$ (because $f$
is analytic on $B(0;r_0)$). Furthermore, $|f^{-1}(z)| = |z| $, for
each $z\in B(0;r_0)$ since $|f(z)| = |z|$ for each $z\in B(0;r_0)$.
\end{proof}

The following result is a nonarchimedean version of a classical result
in dynamics.  It follows in a similar manner as Proposition~\ref{non-arch anal} from considering $P(X)$ as a
polynomial in $(X - \alpha)$ and applying the ultrametric inequality
for nonarchimedean absolute values.
\begin{fact}
\label{dynamics}
Let $P\in \bC_p[X]$ and let $\alpha\in \bC_p$ be a fixed point for
$P$. Assume that $0<|P'(\alpha)|<1$ (i.e. $\alpha$ is an attracting
fixed point for $P$).  Then there exists $R > 0$ such that for each
$z\in B(\alpha;R)\setminus\{0\}$, we have
\begin{equation}\label{from fact}
|P(z)-\alpha|  = |P'(\alpha)| \cdot |z-\alpha| < |z-\alpha| .
\end{equation}  
Thus, there exists $R>0$ such that for every
$z\in B(\alpha;R)$, we have 
$$\lim_{n\to\infty}P^n(z)=\alpha.$$
\end{fact}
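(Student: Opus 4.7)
The plan is to Taylor-expand $P$ about $\alpha$ and then invoke the ultrametric inequality to see that the linear term dominates on a sufficiently small ball. Since $P$ is a polynomial, there is no convergence issue; I can write
\[
P(X)=\alpha+\sum_{i=1}^{\deg P} b_i\,(X-\alpha)^i,\qquad b_1=P'(\alpha),
\]
with finitely many $b_i\in\bC_p$. Thus for any $z\in\bC_p$,
\[
P(z)-\alpha=P'(\alpha)(z-\alpha)+\sum_{i=2}^{\deg P} b_i\,(z-\alpha)^i.
\]

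Next, I choose $R>0$ small enough so that the linear term strictly dominates every higher-order term on $B(\alpha;R)$. Concretely, since the sum over $i$ is finite and $|P'(\alpha)|>0$, I can pick $R>0$ so that
\[
|b_i|\,R^{i-1}<|P'(\alpha)|\qquad\text{for every }i\ge 2,
\]
and in addition $R\le 1$. Shrinking $R$ further if necessary, I also arrange $|P'(\alpha)|\cdot R<R$, which is automatic from $|P'(\alpha)|<1$. Then for any $z\in B(\alpha;R)\setminus\{\alpha\}$ and any $i\ge 2$,
\[
|b_i(z-\alpha)^i|=|b_i|\,|z-\alpha|^{i-1}\cdot|z-\alpha|<|P'(\alpha)|\cdot|z-\alpha|=|P'(\alpha)(z-\alpha)|.
\]
The ultrametric inequality then forces
\[
|P(z)-\alpha|=|P'(\alpha)(z-\alpha)|=|P'(\alpha)|\cdot|z-\alpha|<|z-\alpha|,
\]
which is \eqref{from fact}.

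Finally, since $|P(z)-\alpha|<|z-\alpha|<R$, the ball $B(\alpha;R)$ is $P$-invariant, so the iteration can be repeated. By induction $|P^n(z)-\alpha|=|P'(\alpha)|^n\cdot|z-\alpha|$, and because $|P'(\alpha)|<1$, this tends to $0$, giving $\lim_{n\to\infty}P^n(z)=\alpha$. There is no real obstacle here: the whole argument is a direct application of the nonarchimedean ultrametric inequality to a finite Taylor expansion, entirely parallel to how one verifies the analogous claim in Proposition~\ref{non-arch anal}; the only care needed is in choosing $R$ so small that every one of the finitely many higher-order terms is dominated by the linear term.
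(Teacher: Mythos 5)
Your proof is correct and follows essentially the same route the paper indicates: it sketches the argument by saying the fact ``follows in a similar manner as Proposition~\ref{non-arch anal} from considering $P(X)$ as a polynomial in $(X-\alpha)$ and applying the ultrametric inequality,'' which is exactly what you carry out in detail. You also (correctly) read the exclusion in the statement as $z\neq\alpha$ rather than the paper's typo $z\neq 0$.
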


Using Proposition~\ref{expo}, we construct a $p$-adic logarithmic function in a
neighborhood of any attracting fixed point of a polynomial $P$.

\begin{proposition}
\label{extension-trick}
Let $P(X)\in \bC_p[X]$, and suppose that $\alpha \in  \bC_p$ is a fixed point of $P$. Assume that $0<|P'(\alpha)| < 1$. Then there exists $R>0$, and there exist convergent power series $\exp_P:B(0;R)\to B(\alpha;R)$ and $\log_P:B(\alpha;R)\to B(0;R)$ (which are inverses of each other), such that
$P(\exp_P(X)) = \exp_P(b_1X)$ and $\log_P(P(X))=b_1\log_P(X)$,
where $b_1:=P'(\alpha)$.
\end{proposition}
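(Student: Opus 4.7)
The plan is to reduce the general case to the special case of fixed point at $0$ already handled in Proposition~\ref{expo}, via a translation. Define $Q(X) := P(X + \alpha) - \alpha \in \bC_p[X]$, so that $Q(0) = 0$ and $Q'(0) = P'(\alpha) = b_1$. Since $0 < |b_1| < 1$, we have $b_1 \neq 0$, and $b_1$ cannot be a root of unity (any root of unity in $\bC_p$ has absolute value $1$), so $Q$ satisfies the hypotheses of Proposition~\ref{expo}.

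Applying Proposition~\ref{expo} to $Q$ yields a power series $\exp_Q(X) = X + c_2 X^2 + \dots \in \bC_p[[X]]$ with positive radius of convergence satisfying $Q(\exp_Q(X)) = \exp_Q(b_1 X)$. By Proposition~\ref{non-arch anal}, there exists $R > 0$ such that $\exp_Q : B(0;R) \to B(0;R)$ is an analytic isometry with analytic inverse $\log_Q : B(0;R) \to B(0;R)$. Shrinking $R$ further if necessary, Fact~\ref{dynamics} lets us also assume $|P(z) - \alpha| = |b_1|\cdot|z - \alpha|$ for every $z \in B(\alpha;R)$, so that $P$ maps $B(\alpha;R)$ into itself.

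Now set $\exp_P(X) := \alpha + \exp_Q(X)$ and $\log_P(Y) := \log_Q(Y - \alpha)$. These are analytic isometries $B(0;R) \to B(\alpha;R)$ and $B(\alpha;R) \to B(0;R)$ respectively, inverse to one another by construction. The functional equation for $\exp_P$ is a direct computation:
\[
P(\exp_P(X)) = P(\alpha + \exp_Q(X)) = Q(\exp_Q(X)) + \alpha = \exp_Q(b_1 X) + \alpha = \exp_P(b_1 X).
\]
The equation $\log_P(P(Y)) = b_1 \log_P(Y)$ for $Y \in B(\alpha;R)$ then follows by substituting $Y = \exp_P(X)$ (with $X = \log_P(Y) \in B(0;R)$) into the previous identity and applying $\log_P$; the substitution is legitimate because $P(Y) \in B(\alpha;R)$, so $\log_P(P(Y))$ is defined.

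The genuine analytic content of the proposition was already captured in Proposition~\ref{expo}, so here the only real obstacle is bookkeeping: verifying that the translation transports the functional equation correctly and that the domains match up after shrinking $R$. The crucial point enabling the reduction is that the strict inequality $|b_1| < 1$ automatically rules out $b_1$ being a root of unity, which is precisely the hypothesis Proposition~\ref{expo} requires.
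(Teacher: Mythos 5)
Your proof is correct and is essentially identical to the paper's: both conjugate $P$ by the translation $X \mapsto X + \alpha$, apply Propositions~\ref{expo} and \ref{non-arch anal} to the translated polynomial $Q$ (called $G$ in the paper), shrink $R$ via Fact~\ref{dynamics} so that $P$ stabilizes $B(\alpha;R)$, and then pull back the exponential and logarithm. The only cosmetic difference is that the paper verifies the $\log_P$ identity by direct computation and says ``similarly'' for $\exp_P$, whereas you verify the $\exp_P$ identity first and deduce the one for $\log_P$ by substitution.
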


\begin{proof}
  The proof is a simple change of coordinates argument followed by the
  application of Proposition~\ref{expo}.  Let $G(X):= P(X+\alpha) -
  \alpha$.  Then $G(0) = 0$ and $G'(0)=P'(\alpha)=b_1$. 
Note that $b_1$ is neither $0$ nor a root of unity.
By Propositions~\ref{expo} and \ref{non-arch anal}, we have analytic
exponential and logarithmic functions for $G$ in a neighborhood of
$0$; we denote these as $\exp_G$ and $\log_G$.  Recall that $\log_{G}$
and $\exp_{G}$ are isometries on $B(0;R)$ for sufficiently small $R$. At the expense of shrinking $R$, we may assume that $P$ maps $B(\alpha;R)$ into itself (see Fact~\ref{dynamics}), not necessarily onto.
Then $\exp_P(X):=\exp_G(X) + \alpha$ and $\log_P(X):=\log_G(X-\alpha)$ are inverses of each other, and for any $X\in B(\alpha;R)$ (see Proposition~\ref{expo}), we have
\begin{equation}
\label{fundamental relation 3}
\begin{split}
  \log_P(P(X)) & = \log_{G}(P (X)-\alpha)\\
 & = \log_{G}( G(X-\alpha))\\ & = b_1
  \log_{G}(X-\alpha) \\
& = b_1\log_P(X),
\end{split}
\end{equation}
and similarly, we have $P(\exp_P(X)) = \exp_P(b_1 X)$ for $X\in B(0;R)$.  
\end{proof}


\section{Proof of our main result}
\label{proofs}

We are now almost ready to prove Theorem \ref{polydyn}, which is the main result
of this paper.  The proof makes use of the fact that the zeros of any
analytic function are isolated, unless the function is identically
zero.  The following lemma is standard (see \cite[Lemma
3.4]{compositio}, for example).  

\begin{lemma}\label{zeros}
  Let $F(z) = \sum_{i=0}^{\infty} a_i (z-\alpha)^i$ be a power series with
  coefficients in $\bC_p $ that is convergent in an open disc $B$ of
  positive radius around the point $z = \alpha$.  Suppose that $F$ is not
  the zero function.  Then the zeros of $F$ in $B$ are isolated.
\end{lemma}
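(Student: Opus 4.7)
The plan is to fix an arbitrary zero $\beta\in B$ of $F$ and argue that it is isolated. The strategy has three steps: re-expand the series around $\beta$, factor out the lowest-order vanishing term, and finish with the non-archimedean triangle inequality.

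First, I would re-expand $F$ around $\beta$ by substituting $(z-\alpha)=(z-\beta)+(\beta-\alpha)$ and formally applying the binomial theorem, producing $F(z)=\sum_{j\ge 0}b_j(z-\beta)^j$ with $b_j=\sum_{i\ge j}\binom{i}{j}a_i(\beta-\alpha)^{i-j}$. To verify the rearrangement, I would pick $\rho$ with $|\beta-\alpha|<\rho<r$ (where $r$ is the radius of $B$) and bound the general term of the double sum by $|a_i|\rho^i$, which tends to $0$ since the original series converges on $B(\alpha;r)$. In the ultrametric setting this absolute convergence permits the rearrangement, and the new series converges on $B(\beta;r)=B$ (the two open discs coincide since $\beta\in B$).

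Next, since $F\not\equiv 0$ not all $b_j$ vanish, and $F(\beta)=0$ forces $b_0=0$; letting $k:=\min\{j:b_j\ne 0\}\ge 1$, I would write $F(z)=(z-\beta)^k H(z)$, where $H(z):=\sum_{i\ge 0}b_{k+i}(z-\beta)^i$ converges on $B$ and satisfies $H(\beta)=b_k\ne 0$. Finally, because the tail $H(z)-H(\beta)$ has no constant term, continuity at $\beta$ lets me choose $\rho_0>0$ with $|H(z)-H(\beta)|<|b_k|$ throughout $B(\beta;\rho_0)$; the strong triangle inequality then forces $|H(z)|=|b_k|\ne 0$ there, so $F(z)=(z-\beta)^k H(z)$ vanishes only at $\beta$ in this neighborhood.

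The main obstacle is the first step: justifying that $F$ can be re-expanded at a point $\beta\ne\alpha$ as an ultrametrically absolutely convergent double sum. Once that Fubini-style rearrangement is in place, the factorization and ultrametric continuity argument are routine applications of the strong triangle inequality.
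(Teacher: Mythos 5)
Your proof is correct. Note that the paper does not supply its own argument for this lemma; it simply calls it ``standard'' and cites \cite[Lemma 3.4]{compositio}, so there is no in-text proof to compare against. Your route (re-expand the series about a zero $\beta$, extract the leading power $(z-\beta)^k$, and invoke ultrametric continuity of the unit factor) is the standard elementary one, and it transfers cleanly to the non-archimedean setting because $|\binom{i}{j}|\le 1$ makes the rearrangement of the double sum painless.

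One small imprecision worth tightening: when you justify the re-expansion you fix $\rho$ with $|\beta-\alpha|<\rho<r$ and bound the general term $\binom{i}{j}a_i(\beta-\alpha)^{i-j}(z-\beta)^j$ by $|a_i|\rho^i$, but that bound requires $|z-\beta|\le\rho$ as well. The fix is to work pointwise: for each $z\in B$ choose $\rho$ with $\max\bigl(|\beta-\alpha|,\ |z-\beta|\bigr)<\rho<r$ (possible since $B(\alpha;r)=B(\beta;r)$ by the ultrametric inequality). Then $|c_{i,j}|\le|a_i|\rho^i\to 0$, which is exactly the non-archimedean summability criterion, and the Fubini-style rearrangement is valid at that $z$. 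Also, to see that not all $b_j$ vanish, it is cleanest to note that the change of center is invertible: re-expanding $\sum_j b_j(z-\beta)^j$ back around $\alpha$ by the same token recovers the $a_i$, so $b_j\equiv 0$ would force $a_i\equiv 0$, contradicting $F\not\equiv 0$. With these two clarifications, the argument is complete.
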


We now begin the proof of Theorem \ref{polydyn}.  

\begin{proof}
  Let $R>0$ satisfy the hypothesis of Fact~\ref{dynamics} for each
  $P_i$. Let $x_i\in B(\alpha_i;R)$ for each $i\in\{1,\dots,g\}$. If each $x_i=\alpha_i$, then $\cO_{\cP}((x_1,\dots,x_g))=(\alpha_1,\dots,\alpha_g)$, and so, the conclusion of Theorem~\ref{polydyn} is immediate. Hence, we may assume that 
\begin{equation}
\label{not all preperiodic}
0<\max_{i=1}^g |x_i-\alpha_i| <R.
\end{equation} 
Assume there exists an infinite sequence of non-negative integers
  $\{n_k\}_{k \ge 0}$ such that
  $\cP^{n_k}(x_1,\dots,x_g)\in V$. We will show that $V$
  must contain a positive dimensional subvariety $V_0$ such that
  $\cP(V_0) = V_0$.
  By Fact~\ref{dynamics}, we have
\begin{equation}
\label{it goes to 0}
P_i^{n_k}(x_i)\to \alpha_i\text{ for each $i\in\{1,\dots,g\}$.}
\end{equation}
After replacing $\{n_k\}_k$ by a subsequence and $R$ by a smaller
positive number, we may assume that $\log_{P_i}$ is well-defined at
$P^{n_k}_i(x_i)$ for each $i\in\{1,\dots,g\}$ and $k\ge 0$, by
\eqref{it goes to 0} and Proposition~\ref{extension-trick}. Similarly,
after shrinking $R$ further, we may also assume that the analytic maps
$\log_{P_i}$ and $\exp_{P_i}$ (defined as in
Proposition~\ref{extension-trick}) are isometries (see also
Proposition~\ref{non-arch anal}).

Because not all $x_i=\alpha_i$, we may assume that
\begin{equation}
\label{maximal norm}
\left|\log_{P_1}\left(P_1^{n_0}(x_1)\right)\right|  = \max_{i=1}^g \left|\log_{P_i}\left(P_i^{n_0}(x_i)\right)\right|  >0,
\end{equation}
using the fact that $\log_{P_i}$ is an isometry between
$B(\alpha_i,R)$ and $B(0;R)$.

We will need to use the following claim.
\begin{claim}
\label{same logarithms}
For each $i\in\{2,\dots, g\}$, the fraction
$\lambda_i:=\frac{\log_{P_i} \left(P^{n_k}_i(x_i)\right)}
{\log_{P_1}\left(P^{n_k}_1(x_1)\right)}$ is independent of $k\ge 0$.
\end{claim}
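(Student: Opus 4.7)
The plan is to apply the functional equation from Proposition~\ref{extension-trick} coordinate by coordinate and then take a ratio, exploiting the hypothesis that all multipliers coincide. In the setup preceding the claim, $R$ has already been shrunk so that, for each $i$, the ball $B(\alpha_i;R)$ is stabilized by $P_i$ (Fact~\ref{dynamics}), the map $\log_{P_i}$ is a well-defined analytic isometry on $B(\alpha_i;R)$ (Proposition~\ref{non-arch anal}), and every $P_i^{n_k}(x_i)$ (in particular $P_i^{n_0}(x_i)$) lies in $B(\alpha_i;R)$. Because the multipliers $P_i'(\alpha_i)$ all equal the same scalar $a_1$, Proposition~\ref{extension-trick} supplies the functional equation $\log_{P_i}(P_i(y)) = a_1 \log_{P_i}(y)$ on $B(\alpha_i;R)$, with the \emph{same} $a_1$ for every $i$.

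I would then iterate this identity. After passing to a further subsequence we may assume $n_0 \le n_1 \le n_2 \le \cdots$. Applying the functional equation $n_k - n_0$ times, starting from $y = P_i^{n_0}(x_i)$, yields
$$\log_{P_i}\bigl(P_i^{n_k}(x_i)\bigr) \;=\; a_1^{\,n_k - n_0}\,\log_{P_i}\bigl(P_i^{n_0}(x_i)\bigr)$$
for every $i \in \{1,\dots,g\}$ and every $k \ge 0$. Taking the ratio of the $i$th coordinate to the first, the prefactor $a_1^{\,n_k - n_0}$ is identical in numerator and denominator---this is precisely where the common-multiplier hypothesis is used---and it cancels, leaving
$$\lambda_i \;=\; \frac{\log_{P_i}(P_i^{n_k}(x_i))}{\log_{P_1}(P_1^{n_k}(x_1))} \;=\; \frac{\log_{P_i}(P_i^{n_0}(x_i))}{\log_{P_1}(P_1^{n_0}(x_1))},$$
which is manifestly independent of $k$. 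The ratio is well-defined because \eqref{maximal norm} forces $\log_{P_1}(P_1^{n_0}(x_1)) \ne 0$, and consequently $\log_{P_1}(P_1^{n_k}(x_1))$ is nonzero for every $k$ (as $a_1 \ne 0$).

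I do not anticipate a substantive obstacle: once every relevant iterate is known to lie in the domain of the logarithm---which was arranged in the paragraph immediately preceding the claim---the argument collapses to a one-line cancellation. The only point worth double-checking is that the shared scalar $a_1$ really equals each $P_i'(\alpha_i)$, but this is built into the standing hypothesis of Theorem~\ref{polydyn}. Without the common-multiplier assumption, the prefactors in the numerator and denominator would be $a_1^{\,n_k-n_0}$ and (say) $b_1^{\,n_k-n_0}$ for distinct scalars, and the ratio would depend on $k$; so this claim is essentially the place in the overall proof where that hypothesis gets cashed in.
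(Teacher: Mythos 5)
Your proof is correct and follows the paper's argument essentially verbatim: both iterate the functional equation $\log_{P_i}(P_i(\cdot)) = a_1\log_{P_i}(\cdot)$ from Proposition~\ref{extension-trick} to write each numerator and denominator as $a_1^{n_k-n_0}$ times the $n_0$-value, and then cancel the common prefactor. Your added remark about reordering the $n_k$ to be nondecreasing (so that the exponent $n_k - n_0$ is nonnegative) is a small point of care that the paper leaves implicit but does not change the substance of the argument.
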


\begin{proof}[Proof of Claim~\ref{same logarithms}.]
  First of all, it follows immediately from \eqref{maximal norm} and Fact~\ref{dynamics} that
  the denominator of $\lambda_i$ is not zero.  Then, using
  \eqref{fundamental relation 3}, we see that
\begin{equation}
\log_{P_1}(P_1^{n_k}(x_1)) = a_1^{n_k-n_0} \cdot\log_{P_1}\left(P_1^{n_0}(x_1)\right)
\end{equation}
Similarly, we obtain $\log_{P_i}(P_i^{n_k}(x_i)) = a_1^{n_k-n_0}
\cdot\log_{P_i}\left(P_i^{n_0}(x_i)\right)$. This proves
Claim~\ref{same logarithms}.
\end{proof}

For each polynomial $f\in \bC_p[X_1,\dots,X_g]$ in the vanishing ideal of $V$, we construct the following power series
$$F(u):= f\left(u,\exp_{P_2}\left(\lambda_2\cdot \log_{P_1}(u)\right),\dots,\exp_{P_g}\left(\lambda_g\cdot \log_{P_1}(u)\right)\right).$$
Using \eqref{maximal norm} and Claim~\ref{same logarithms}, we conclude that $|\lambda_i| \le 1$ for each $i\in\{2,\dots,g\}$. Therefore, for each $i$ we have that $\lambda_i\cdot \log_{P_1}(u)\in B(0;R)$ if $u\in B(\alpha_1;R)$. Hence $F$ is analytic on $B(\alpha_1;R)$ (we also use the fact that $f$ is a polynomial).

Using Claim~\ref{same logarithms}, we conclude that for each $k\ge 0$, we have
\begin{equation}
\lambda_i\cdot\log_{P_1}\left(P_1^{n_k}(x_1)\right) = \log_{P_i}\left(P_i^{n_k}(x_i)\right)\text{ and so,}
\end{equation}
$$F(P_1^{n_k}(x_1)) =
f\left(P_1^{n_k}(x_1),P_2^{n_k}(x_2),\dots,P_g^{n_k}(x_g)\right) =
0,$$
where in the last equality we used that
$\left(P_1^{n_k}(x_1),\dots,P_g^{n_k}(x_g)\right)\in V$. But
$\lim_{k\to\infty} P_1^{n_k}(x_1)=\alpha_1$ (see \eqref{it goes to
  0}). Because the zeros of $F$ have an accumulation point inside its
domain of convergence, we conclude that $F=0$ (see Lemma~\ref{zeros}). Thus, we have
$$f\left(u,\exp_{P_2}\left(\lambda_2\log_{P_1}(u)\right),\dots,\exp_{P_g}\left(\lambda_g
    \log_{P_g}(u)\right)\right)=0$$
for all polynomials $f$ in the
vanishing ideal of $V$, and for each $u\in B(\alpha_1;R)$.  In particular, each $f$ vanishes on the set
$$\cS = \left\{ \left(P_1^n(x_1),\dots,P_g^n(x_g)\right) \text{ : } n\ge n_0
\right\}$$
Let $W_0$ be the Zariski closure of $\cS$ and let $V_0$ be
the union of the positive dimensional components of $W_0$ (note that
$V_0$ is nonempty since the set $\cS$ is infinite).  Then $V_0$ is a
subvariety of $V$, because any polynomial $f$ vanishes on $V_0$
whenever $f$ vanishes on $V$.  Since $\cP(\cS)$ and $\cS$ differ by at
most one point (namely, the point
$\cP^{n_0}(x_1,\dots,x_g)$), we see that
$\cP(V_0) = V_0$.

Thus, if $V$ contains infinitely many points in the
$\cP$-orbit of $(x_1,\dots,x_g)$, then $V$ must contain a
$\cP$-invariant subvariety of positive dimension.
\end{proof}

The following result is a corollary of our Theorem~\ref{polydyn}
\begin{cor}
\label{p-adic integers}
Let $g\ge 2$, and for each $i\in\{1,\dots,g\}$ we let
$$P_i(X) = a_1X + \sum_{j=2}^{d_i} a_{i,j}X^j\in\bC_p[X].$$
Assume that $0<|a_1| <1$ and that for each $i\in\{1,\dots,g\}$, and
for each $j\ge 2$, we have $|a_{i,j}| \le 1$.

Let $V\subset \bA^g$ be an affine subvariety defined over $\bC_p $
which contains no positive dimensional subvariety invariant under the
$\cP$-action. Then for each $g$-tuple of $p$-adic integers
$(\alpha_1,\dots,\alpha_g)\in \bC_p$ the $\cP$-orbit of
$(\alpha_1,\dots,\alpha_g)$ intersects $V(\bC_p)$ in at most finitely
many points.
\end{cor}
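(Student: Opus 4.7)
The plan is to deduce the corollary from Theorem~\ref{polydyn} applied at the common attracting fixed point $(0,\ldots,0)$. Since every $P_i$ has zero constant term, the origin is a common fixed point of $\cP$, and $P_i'(0)=a_1$ for every $i$. The hypothesis $0<|a_1|<1$ makes $0$ an attracting (non-superattracting) fixed point of each $P_i$ with common multiplier $a_1$, and $a_1$ is not a root of unity (since $|a_1|<1$ while every root of unity has absolute value one). Theorem~\ref{polydyn} therefore supplies an $R>0$ such that every $\cP$-orbit starting in the polydisc $B(0;R)^g$ intersects $V(\bC_p)$ in only finitely many points.

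It then suffices to show that the $\cP$-orbit of any $g$-tuple of $p$-adic integers $(\alpha_1,\ldots,\alpha_g)$ eventually enters $B(0;R)^g$; the finitely many initial iterates can contribute at most finitely many extra intersections with $V$, while the tail is controlled by Theorem~\ref{polydyn}. The key ultrametric estimate, valid for any $x\in\bC_p$ with $|x|\le 1$, is
\[
|P_i(x)| \;\le\; \max\bigl(|a_1|\cdot|x|,\;|x|^{2}\bigr) \;\le\; |x|,
\]
with strict inequality whenever $|x|<1$. Hence every coordinate of the orbit stays in the closed unit disc, and once a coordinate crosses into the open unit disc its iterates decay geometrically to~$0$ and so eventually land inside $B(0;R)$.

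The main obstacle is therefore the boundary case in which some $|\alpha_i|=1$ and $|P_i^n(\alpha_i)|$ could a priori remain identically~$1$. I would handle this by reducing modulo the maximal ideal $\pi$ of $\cO_{\bC_p}$: each coordinate orbit mod $\pi$ lives in a finite subfield of the residue field and is therefore eventually periodic, and $\bar P_i(X)=\sum_{j\ge 2}\bar a_{i,j}X^{j}$ has no linear term (because $|a_1|<1$ forces $\bar a_1=0$). Passing to a simultaneous iterate $\cP^{M}$ of $\cP$ whose order matches the eventual period of the residue orbit and applying Hensel's lemma within each residue class should produce a common attracting periodic cycle of $\cP$ that absorbs the orbit, reducing the boundary case to the previous paragraph. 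Making this reduction precise and uniform across all $g$ coordinates—in particular controlling the multipliers of $\cP^{M}$ at the periodic point well enough to reapply Theorem~\ref{polydyn}—is the most delicate step of the argument.
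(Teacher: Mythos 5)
Your first paragraph---locating the common attracting, non-superattracting fixed point $(0,\dots,0)$ with multiplier $a_1$, noting $a_1$ is not a root of unity because $|a_1|<1$, and invoking Theorem~\ref{polydyn}---is exactly the paper's argument, and together with the ultrametric estimate in your second paragraph it does settle the case $\max_i|\alpha_i|<1$.

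The difficulty is the boundary case $|\alpha_i|=1$, which you correctly flag but do not resolve; and, interestingly, the paper's own proof has the same gap. The paper simply asserts that the hypotheses guarantee $\lim_{n\to\infty}P_i^n(\alpha_i)=0$, which is false on the unit sphere: take $P(X)=pX+X^2$ and $\alpha=1$. Then $P(x)\equiv x^2\pmod p$, so $|P^n(1)|=1$ for every $n$ and the orbit never enters the basin of $0$. Your proposed repair via reduction and Hensel's lemma does not close the gap. The residue orbit is indeed eventually periodic (the residues of $\alpha_i$ and the $a_{i,j}$ lie in a finite subfield of $\bar\bF_p$), and Hensel's lemma does produce a periodic point of $P_i$ lifting the residue cycle whenever the relevant derivative is a unit. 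But there is no reason for that lifted cycle to be \emph{attracting}. In the example above the unique fixed point lifting $\bar\alpha=1$ is $\gamma=1-p$, with multiplier $P'(\gamma)=2-p$; for $p\ne 2$ one has $|2-p|=1$, so $\gamma$ is indifferent, Theorem~\ref{polydyn} does not apply, and in fact the orbit of $1$ does not converge to $\gamma$. (For $p=2$ the fixed point is superattracting, which is also excluded by Theorem~\ref{polydyn}.) Even in situations where the lifted cycles happen to be attracting, Theorem~\ref{polydyn} additionally requires the multipliers of the various $P_i^M$ at their respective cycle points to coincide, and nothing in the hypotheses forces this once you move away from $0$. In short, your argument, like the paper's one-line proof, is complete only under the implicit extra hypothesis $|\alpha_i|<1$; your contribution is to have noticed, correctly, that the stated version of the corollary needs this restriction (or a genuinely new idea) to handle $|\alpha_i|=1$.
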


The proof of Corollary~\ref{p-adic integers} is immediate after we
notice that our hypothesis on the coefficients of the polynomials
$P_i$ and on the $\alpha_i$ guarantee us that for each
$i\in\{1,\dots,g\}$, we have $\lim_{n\to \infty} P_i^n(\alpha_i) = 0.$
Furthermore, $0$ is an attracting fixed point for each $P_i$;
therefore, Theorem~\ref{polydyn} shows that $V$ intersects the
$\cP$-orbit of $(\alpha_1,\dots,\alpha_g)$ in at most finitely many
points.

\def\cprime{$'$} \def\cprime{$'$} \def\cprime{$'$} \def\cprime{$'$}
\providecommand{\bysame}{\leavevmode\hbox to3em{\hrulefill}\thinspace}
\providecommand{\MR}{\relax\ifhmode\unskip\space\fi MR }
\providecommand{\MRhref}[2]{%
  \href{http://www.ams.org/mathscinet-getitem?mr=#1}{#2}
}
\providecommand{\href}[2]{#2}

\end{document}